\newtheorem{theorem}{Theorem}
\newtheorem{lemma}[theorem]{Lemma}
\newtheorem{corollary}[theorem]{Corollary}
\newtheorem{proposition}[theorem]{Proposition}
\newtheorem*{theorem*}{Theorem}
\newtheorem{conjecture}{Conjecture}
\theoremstyle{definition}
\newtheorem{remark}{Remark}
\newcommand{\Z}{\mathbb{Z}}
\newcommand{\Q}{\mathbb{Q}}
\newcommand{\Gal}{\textrm{Gal}}
\newcommand{\Aut}{\textrm{Aut}}
\newcommand{\GL}{\textrm{GL}}
\newcommand{\tors}{\textrm{tors}}
\newcommand{\cE}{\mathcal{E}}
\date{}
\subjclass[2010]{11G05}
\author{Tyler Genao}
\thanks{Email: \texttt{tylergenao@uga.edu}. 
This material is based upon work supported by the National Science Foundation Graduate Research Fellowship under Grant No. 1842396. Partial support was also provided by the Research and Training Group grant DMS-1344994 funded by the National Science Foundation.}
\title[Polynomial Bounds on Torsion From a Fixed Geometric Isogeny Class]{Polynomial Bounds on Torsion From a Fixed Geometric Isogeny Class of Elliptic Curves}
\begin{document}
\begin{abstract}
We show there exist polynomial bounds on torsion of elliptic curves which come from a fixed geometric isogeny class. More precisely, for an elliptic curve $E_0$ defined over a number field $F_0$, for each $\epsilon>0$ there exist constants $c_\epsilon:=c_\epsilon(E_0,F_0),C_\epsilon:=C_\epsilon(E_0,F_0)>0$ such that for any elliptic curve $E_{/F}$ geometrically isogenous to $E_0$, if $E(F)$ has a point of order $N$ then
\[
N\leq c_\epsilon\cdot [F:\Q]^{1/2+\epsilon},
\]
and one also has
\[
\# E(F)[\tors] \leq C_\epsilon\cdot [F:\Q]^{1+\epsilon}.
\]

\end{abstract}
\maketitle
\section{Introduction}
For an elliptic curve $E$ defined over a number field $F$, the Mordell-Weil theorem states that the abelian group $E(F)$ of $F$-rational points on $E$ is finitely generated. One consequence of this is that its torsion subgroup $E(F)[\tors]$ is finite. In fact, a celebrated result of Merel \cite[Corollaire]{Mer96} showed that the size $\#E(F)[\tors]$ is uniformly bounded in the degree of $F$; more precisely, for each integer $d\in\Z^+$ there exists a bound $B(d)$ on torsion subgroup sizes $\#E(F)[\tors]$ over all elliptic curves $E_{/F}$ where $[F:\Q]=d$.

Sharp values of $B(d)$ are only known for $d\leq 3$, via a complete classification of torsion groups of elliptic curves \cite{Maz77, KM88, Kam92a, Kam92b, DEvHMZB}. However, Merel \cite{Mer96} gave an explicit upper bound on prime power divisors of $\#E(F)[\tors]$ in terms of $d$, which was later strengthened by Parent \cite[Corollaire 1.8]{Par99}: if $p^n\mid \#E(F)[\tors]$ then $p^n\leq 129(5^d-1)(3d)^6$. This gives bounds $B(d)$ which are larger than exponential in the degree $d$.

It is a folklore conjecture that there exist polynomial bounds on torsion groups of elliptic curves over number fields. More precisely:
\begin{conjecture}\cite{CCS13}\label{ConjPolyBds}
There exist constants $C,\alpha>0$ such that for all elliptic curves $E_{/F}$ one has $\#E(F)[\emph{tors}]\leq C\cdot [F:\Q]^\alpha$.
\end{conjecture}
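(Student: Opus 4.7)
The strategy I would attempt is to bound $N := \exp(E(F)[\tors])$ polynomially in $d := [F:\Q]$: by the structure theorem $E(F)[\tors] \subseteq \Z/M\Z \times \Z/N\Z$ with $M \mid N$, one has $\#E(F)[\tors] \leq N^2$, so a polynomial bound on the exponent $N$ upgrades directly to a polynomial bound on the order. Thus the target reduces to proving $N \leq c\cdot d^{\alpha/2}$ uniformly over all elliptic curves $E_{/F}$.

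A point of exact order $N$ in $E(F)$ corresponds to a noncuspidal $F$-rational point on the modular curve $X_1(N)$, equivalently a closed point of $(X_1(N))_{/\Q}$ of residue degree at most $d$. Two geometric inputs are available. First, Abramovich's theorem gives $\gamma_{\C}(X_1(N)) \gg N$, so $X_1(N)$ admits no degree-$d$ map to $\PP^1$ once $N$ exceeds a constant multiple of $d$; in particular, any degree-$d$ closed point that is contained in a $g^1_d$ immediately yields $N = O(d)$. Second, the Weil pairing forces $\mu_N \subseteq F(E[N])$ and restricts the image of the mod-$N$ representation into a Borel subgroup of $\GL_2(\Z/N\Z)$ with prescribed determinant, yielding mild lower bounds on $[F(E[N]):F]$ that corroborate (without sharpening) the expected scaling.

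The main obstacle is passing from ``no degree-$d$ map'' to ``no sporadic degree-$d$ closed point.'' A priori, $X_1(N)$ can carry isolated algebraic points of small degree that do not arise from any gonality pencil, and ruling these out \emph{uniformly in $N$} is precisely the deep aspect of the conjecture: Faltings' theorem applied to the symmetric power $X_1(N)^{(d)}$ handles each $N$ individually, but provides no control as $N \to \infty$, and the best known uniform bounds of Merel and Parent remain exponential in $d$. The present paper circumvents this by restricting to a fixed geometric isogeny class, which forces the relevant $X_1(N)$-points onto an explicit modular locus parametrized by isogenies out of $E_0$ and admitting a direct polynomial bound. Without such a geometric confinement, a proof of the full conjecture appears to require a genuinely new input --- for instance, a uniform quantitative Mordell-Lang statement for symmetric powers of modular curves, an effective Bogomolov-type finiteness for sporadic closed points on $X_1(N)$, or a new structural input beyond the Mazur-Kamienny-Merel-Parent framework. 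It is this gap that one should expect to be the principal difficulty, and realistically the step where a candidate proof of the full conjecture would have to break new ground.
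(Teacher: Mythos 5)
This statement is Conjecture \ref{ConjPolyBds}, a folklore conjecture quoted from \cite{CCS13}; the paper does not prove it, and no proof is known. Your proposal correctly recognizes this: you do not claim a proof, you reduce the order bound to an exponent bound (which is also the paper's first reduction, via $\#E(F)[\tors]\mid(\exp E(F)[\tors])^2$), and you accurately locate the genuine obstruction, namely that gonality bounds for $X_1(N)$ rule out points lying in low-degree pencils but give no uniform control over sporadic low-degree closed points as $N\to\infty$, which is exactly why Merel--Parent remains exponential in the degree. As an assessment of the conjecture's status, this is sound.

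One correction, though, about how you describe the paper's workaround in its Theorem \ref{ThmPolyBoundsIsogenyClass}: the paper does not argue on modular curves at all, and there is no ``explicit modular locus parametrized by isogenies out of $E_0$.'' Instead it works entirely with Galois representations. The key inputs are: (i) a composite-$n$ version of Greenberg's lattice argument (Proposition \ref{PropGreenbergNadic}, via Haar measure and unimodularity of $\GL_d(\Q_n)$) showing that $F$-isogenous non-CM curves have equal adelic indices $[\GL_2(\hat{\Z}):\rho_E(G_F)]$ (Corollary \ref{CorollaryAdelicDivisibility}); (ii) the Le Fourn--Najman fact that a curve geometrically isogenous to $E_0$ becomes isogenous to it over an at-worst-quadratic extension $L$ of $FF_0$; and (iii) the divisibility $\varphi(n)\psi(n)\mid[\GL_2(\Z/n\Z):\rho_{E,n}(G_F)]$ coming from containment in $B_1(n)$ when there is a point of order $n$, combined with $\varphi(n)\gg_\epsilon n^{1-\epsilon}$ and $\psi(n)>n$. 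The fixed curve $E_0$ enters only through its (finite, by Serre) adelic index $I$, which caps the index of $\rho_E(G_L)$ up to a factor divisible by $[L:F_0]\mid 2([F_0:\Q]-1)!\,[F:\Q]$, yielding $n^{2-\epsilon}\ll[F:\Q]$. So the ``geometric confinement'' you describe is really an arithmetic statement about invariance of adelic indices under isogeny, not a geometric one about modular loci; if you were to pursue Conjecture \ref{Conj_PolyBdsForI_Q} or the full conjecture along the paper's lines, the missing ingredient is uniform control of adelic indices (e.g.\ Zywina's conjecture over $\Q$), not a Mordell--Lang-type statement.
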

Parent's bounds above \cite{Par99} are more than an exponential factor away from this conjecture. However, several results in the literature support this conjecture once we restrict certain parameters of our elliptic curves. For example, for any elliptic curve $E_{/F}$ with integral $j$-invariant, Hindry and Silverman have shown that $\#E(F)[\tors]\leq 1977408\cdot d\log(d)$ when $d:=[F:\Q]>1$ \cite[Th\'{e}or\`{e}me 1]{HS99}. In a stricter case, if $E$ has complex multiplication (CM) then Clark and Pollack have shown that $\#E(F)[\tors] \leq C \cdot d\log\log d$ when $d>2$, where $C\in\Z^+$ is some absolute, effectively computable constant \cite[Theorem 1]{CP15}. 

There are also polynomial bounds for elliptic curves with rational $j$-invariant:
Clark and Pollack have shown that for each $\epsilon>0$ there exists a constant $C_\epsilon>0$ such that for any elliptic curve $E_{/F}$ whose $j$-invariant $j(E)\in \Q$, one has that the exponent\footnote{Given a finite group $(G,+)$, its \textit{exponent} $\exp G$ is the least integer $n\in\Z^+$ such that $nG=0$. When $G$ is abelian, its exponent is equal to the largest possible order of an element in $G$.}  $\exp E(F)[\tors] \leq C_\epsilon\cdot [F:\Q]^{3/2+\epsilon}$, and thus $\#E(F)[\tors]\leq C_\epsilon\cdot [F
:\Q]^{5/2+\epsilon}$ \cite[Theorem 1.3]{CP18}.
An identical result holds when one assumes the Generalized Riemann Hypothesis (GRH) and replaces $\Q$ with a number field $F_0$ which contains no Hilbert class field of any imaginary quadratic field 
\cite[Theorem 1.6]{CP18}.

The principal result of this paper constructs polynomial bounds on orders of torsion points (and thus torsion groups) of non-CM elliptic curves $E_{/F}$ within a fixed geometric isogeny class. Recall that an \textit{isogeny} between elliptic curves $E$ and $E'$ is a nonconstant algebraic map $\phi\colon E\rightarrow E'$ which preserves basepoints. We say that $\phi$ is $F$-rational if $E$, $E'$ and $\phi$ are defined over $F$. As an adjective, ``geometric" will mean $\overline{\Q}$-rational, where $\overline{\Q}$ is a fixed algebraic closure of $\Q$.
\begin{theorem}\label{ThmPolyBoundsIsogenyClass}
Fix a number field $F_0$ and a non-CM elliptic curve ${E_0}_{/F_0}$. Then for each $\epsilon>0$ there exist constants $c_\epsilon:=c_\epsilon(E_0,F_0),C_\epsilon:=C_\epsilon(E_0,F_0)>0$ such that for any elliptic curve $E_{/F}$ geometrically isogenous to ${E_0}_{/F_0}$, one has both
\[
\exp E(F)[\emph{tors}]\leq c_\epsilon\cdot [F:\Q]^{1/2+\epsilon}
\] 
and
\[
\#E(F)[\emph{tors}]\leq C_\epsilon\cdot [F:\Q]^{1+\epsilon}.
\]
\end{theorem}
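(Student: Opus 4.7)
My plan is to transfer torsion bounds from the fixed non-CM curve $E_0/F_0$, where Serre's open image theorem applies directly, to the geometrically isogenous curve $E/F$ via a careful comparison of the Galois representations on their Tate modules.

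First I would reduce to the case where $F \supset F_0$ and the primitive isogeny $\phi\colon E_0 \to E$ is $F$-rational. Since $E_0$ is non-CM, $\mathrm{Hom}(E_0, E)$ is a free $\Z$-module of rank one, so primitive isogenies $E_0 \to E$ differ only by sign. Consequently, $G_{F F_0}$ acts on any chosen primitive isogeny through a quadratic character, and the field of definition of $\phi$ is at most quadratic over $F F_0$. Enlarging $F$ to contain both $F_0$ and the field of definition of $\phi$ costs a factor of at most $2[F_0:\Q]$ in $[F:\Q]$, which is absorbed into the constants $c_\epsilon, C_\epsilon$.

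Next, by Serre's open image theorem, there is a constant $M = M(E_0, F_0)$ with $[\GL_2(\widehat \Z) : \rho_{E_0}(G_{F_0})] \leq M$, so $[\GL_2(\Z/N\Z) : \rho_{E_0,N}(G_F)] \leq M \cdot [F:F_0]$ for every $N \geq 1$. The main technical step is to transfer this bound from $E_0$ to $E$: the isogeny $\phi$ identifies $V_\ell(E_0)$ with $V_\ell(E)$ as $G_F$-representations, and under this identification $T_\ell(E_0) \subset T_\ell(E)$ with cyclic cokernel of order $\ell^{\operatorname{ord}_\ell(\deg \phi)}$, the cyclicity following from the primitivity of $\phi$ together with $\mathrm{End}(E_0) = \Z$. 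A short orbit-counting argument on cyclic sublattices then shows that the common Galois image has the same index in the two maximal compact subgroups $\GL(T_\ell(E_0))$ and $\GL(T_\ell(E))$ of $\GL_2(\Q_\ell)$; combining multiplicatively over primes yields $[\GL_2(\Z/N\Z) : \rho_{E,N}(G_F)] \leq M \cdot [F:F_0]$ as well.

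Finally, suppose $P \in E(F)$ has order $N$. Then $\rho_{E,N}(G_F)$ stabilizes $P$, so lies in a subgroup of $\GL_2(\Z/N\Z)$ of index at least $\tfrac{6}{\pi^2} N^2$ (the orbit size of a primitive vector, counted via Jordan's totient). Comparing with the transferred Serre bound gives $N \leq c \cdot [F:\Q]^{1/2}$ with $c$ depending only on $E_0$ and $F_0$, which implies the desired $\exp E(F)[\tors] \leq c_\epsilon \cdot [F:\Q]^{1/2+\epsilon}$. The bound on $\#E(F)[\tors]$ then follows from $E(F)[\tors] \hookrightarrow (\Q/\Z)^2$, which forces $\#A \leq (\exp A)^2$ for any such finite subgroup $A$. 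The hard part will be the lattice-transfer step --- specifically, the orbit-counting at primes $\ell$ dividing $\deg \phi$, where the non-CM hypothesis is essential to ensure the cyclic structure of $T_\ell(E)/T_\ell(E_0)$ that makes the indices in the two maximal compacts agree.
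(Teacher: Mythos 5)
Your proposal is correct and follows essentially the same route as the paper: reduce to exponent, move to an at-worst-quadratic extension of $FF_0$ over which $E$ and $E_0$ are isogenous (the paper cites Le Fourn--Najman for this; your quadratic-character argument on the rank-one $\mathrm{Hom}$-module is the same reasoning), invoke Serre open image for $E_0/F_0$, transfer the adelic index from $E_0$ to $E$ via a ``same index in two maximal compacts'' lemma, and finally observe that a rational point of order $N$ forces $\rho_{E,N}(G_F)$ into a Borel-like subgroup of index $\varphi(N)\psi(N)$. Two small differences are worth noting. First, the paper proves the lattice-transfer lemma (Corollary~\ref{CorollaryAdelicDivisibility}, via Proposition~\ref{PropGreenbergNadic}) by Greenberg's Haar-measure argument, whereas you propose an elementary orbit count of cyclic sublattices; both give equality of indices, and your count (the $\psi(d)$-many cyclic sublattices versus $\psi(d)$-many cyclic superlattices) works, though note that the equality of indices actually holds for any pair of $G$-stable lattices regardless of cyclic cokernel --- the unimodularity of $\mathrm{GL}_2(\Q_\ell)$ handles the general elementary-divisor type --- so your parenthetical that cyclicity is what ``makes the indices agree'' overstates its role (the non-CM hypothesis is needed for Serre's theorem, not for this lemma). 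Second, your closing estimate is actually slightly sharper than the paper's: you use $J_2(N)=\varphi(N)\psi(N)\geq \tfrac{6}{\pi^2}N^2$ directly, so you get $N\ll_{E_0,F_0} [F:\Q]^{1/2}$ with no $\epsilon$, whereas the paper uses the weaker factorization $\psi(n)>n$ and $\varphi(n)\gg_\epsilon n^{1-\epsilon}$ and thus incurs an $\epsilon$. (Your stronger conclusion of course still implies the stated theorem.) The only place to be careful in writing this up is the phrase ``combining multiplicatively over primes'': the adelic index is not in general a product of $\ell$-adic indices, so you should either do the orbit count adelically (on $\hat\Z$-lattices, where the $\psi(d)$ count still holds by CRT) or, as the paper does, work $n$-adically at the level $n$ of the image and use the openness from Serre to pass to the adelic index.
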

\begin{remark}
In Theorem \ref{ThmPolyBoundsIsogenyClass}, The power ``$1/2+\epsilon$" in the exponent bound $c_\epsilon\cdot [F:\Q]^{1/2+\epsilon}$ is optimal ``up to $\epsilon$", since for any elliptic curve $E_{/F}$, any integer $N\in\Z^+$ and any torsion point $R\in E[N]$, one always has $[F(R):F]\leq N^2$.
\end{remark}
\begin{remark}\label{Rmk_CMcase}
We must assume in Theorem \ref{ThmPolyBoundsIsogenyClass} that our elliptic curves are non-CM to have the \textit{torsion group exponent} bound $c_\epsilon\cdot [F:\Q]^{1/2+\epsilon}$ hold. Indeed, one can show using e.g. \cite[Corollary 1.8]{BC20} that for any imaginary quadratic field $K$, the geometric isogeny class of elliptic curves with CM field $K$ contains infinitely many elliptic curves $E_{/F}$ with $[F:\Q]$ arbitrarily large and $\exp E(F)[\tors]>[F:\Q]$. Despite this, as noted earlier there is an asymptotically sharp bound on the size of \textit{full torsion groups} of CM elliptic curves: one always has $\# E(F)[\tors]\leq C\cdot [F:\Q]\log\log [F:\Q]$ when $[F:\Q]>2$ for some absolute, effectively computable constant $C\in\Z^+$ \cite[Theorem 1]{CP15}.
\end{remark}

With Remark \ref{Rmk_CMcase} in mind, we will assume for the rest of this paper that our elliptic curves have no geometric CM. 
A key step for us in polynomially bounding torsion from a non-CM geometric isogeny class $\cE$ will be to relate the adelic indices of two rationally isogenous non-CM elliptic curves (this is Corollary \ref{CorollaryAdelicDivisibility}).

In contrast to \cite[Theorem 1.3]{CP18}, the collection of elliptic curves in Theorem \ref{ThmPolyBoundsIsogenyClass} will contain curves whose $j$-invariants $j'$ have arbitrarily large degrees $[\Q(j'):\Q]$. However, both Theorem \ref{ThmPolyBoundsIsogenyClass} and \cite[Theorem 1.3]{CP18} are part of a natural uniformity conjecture on torsion groups that is motivated by our current understanding of Galois representations of rational elliptic curves. 
\begin{conjecture}\label{Conj_PolyBdsForI_Q}
There exist constants $C, \alpha>0$ such that for all elliptic curves $E_{/F}$ geometrically isogenous to some elliptic curve defined over $\Q$, one has $\# E(F)[\emph{tors}] \leq C\cdot [F:\Q]^{\alpha}$.
\end{conjecture}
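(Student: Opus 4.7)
The plan is to convert the existence of a torsion point of order $N$ in $E(F)$ into a large lower bound on the adelic Galois index of $E$ over $F$, and then cap that index from above using the isogeny to $E_0$ combined with Serre's open image theorem. The torsion-group size bound will follow from the exponent bound by the structure theorem for finite subgroups of $E(\overline{F})$.

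First I would replace $F$ by the compositum $FF_0$ (a degree-at-most-$[F_0:\Q]$ extension) and then extend further to a field $F'$ over which a chosen isogeny $\phi\colon E\to E_0$ becomes rational. Since both curves are non-CM, $\text{Hom}_{\overline{\Q}}(E,E_0)$ is a rank-one $\Z$-module, so $G_{FF_0}$ acts on a generator by $\pm 1$ and hence $[F':FF_0]\leq 2$. The crucial point is that this extra degree cost is an absolute constant times $[F_0:\Q]$, independent of $E$.

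Next, Serre's open image theorem supplies a constant $c_0:=[\text{GL}_2(\widehat{\Z}):\rho_{E_0,F_0}(G_{F_0})]<\infty$, and base-changing to $F'$ yields
\[
[\text{GL}_2(\widehat{\Z}):\rho_{E_0,F'}(G_{F'})]\leq c_0\cdot[F':F_0]\ll_{E_0,F_0}[F:\Q].
\]
Corollary \ref{CorollaryAdelicDivisibility} then translates this into a comparable bound for the adelic index of $E$ over $F'$. For the matching lower bound, if $P\in E(F)$ has order $N$, then $\bar{\rho}_{E,N}(G_F)$ lies in the stabilizer of $P$ in $\text{GL}_2(\Z/N)$, a subgroup whose index is exactly $N^2\prod_{p\mid N}(1-p^{-2})=\Theta(N^2)$. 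Combining the two estimates and absorbing the slack into an $\epsilon$-factor yields $N^2\ll_\epsilon [F:\Q]^{1+\epsilon}$, proving the exponent bound. The group-size bound then follows from the standard structure $E(F)[\tors]\cong\Z/m\oplus\Z/n$ with $m\mid n$, which gives $\#E(F)[\tors]\leq(\exp E(F)[\tors])^2$.

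The principal obstacle is controlling the multiplicative factor produced by Corollary \ref{CorollaryAdelicDivisibility}: the isogeny degree $\deg\phi$ is unbounded as $E$ varies over the geometric isogeny class of $E_0$, so one must argue this factor is either absorbable into $[F:\Q]^\epsilon$ or is bounded by constants depending only on the fixed pair $(E_0,F_0)$. This is exactly what the paper flags as ``a key step,'' and is presumably what makes the adelic (rather than $\ell$-adic) comparison of Galois images essential here; the $\ell$-adic analogue, applied prime-by-prime, would not suffice because the product of local discrepancies could overwhelm the $[F:\Q]$ factor.
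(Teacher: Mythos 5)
The statement you are proving is a \emph{conjecture}, and the paper does not prove it: it proves Theorem \ref{ThmPolyBoundsIsogenyClass}, where the base curve $E_0/F_0$ is \emph{fixed}, and explicitly notes that Conjecture \ref{Conj_PolyBdsForI_Q} would follow only under an additional open hypothesis (Zywina's uniformity conjecture on adelic indices of non-CM elliptic curves over $\Q$). Your argument has exactly this gap: you write ``Serre's open image theorem supplies a constant $c_0:=[\mathrm{GL}_2(\widehat{\Z}):\rho_{E_0}(G_{F_0})]<\infty$,'' but in Conjecture \ref{Conj_PolyBdsForI_Q} the rational curve $E_0$ to which $E$ is geometrically isogenous is \emph{not} fixed --- it ranges over all elliptic curves over $\Q$. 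Serre's theorem gives finiteness of each individual index but no bound uniform in $E_0$; whether such a uniform bound exists is precisely the open problem the paper cites as \cite[Conjecture 1.3]{Zyw}. As written, your proof establishes Theorem \ref{ThmPolyBoundsIsogenyClass} (with $F_0=\Q$), by essentially the same route as the paper, but not the conjecture.

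Two further remarks. First, the ``principal obstacle'' you identify --- that the isogeny degree $\deg\phi$ is unbounded and might inflate the index comparison --- is not actually an obstacle: Corollary \ref{CorollaryAdelicDivisibility} gives an exact \emph{equality} of adelic indices, independent of $\deg\phi$, via the Haar-measure argument of Proposition \ref{PropGreenbergNadic}. The genuine obstacle is the uniformity over the base curve just described. Second, the conjecture as stated also includes CM curves (e.g.\ those with rational CM $j$-invariant), for which your exponent-based strategy fails outright (Remark \ref{Rmk_CMcase}); that case must instead be handled by the Clark--Pollack bound $\#E(F)[\tors]\leq C\cdot[F:\Q]\log\log[F:\Q]$. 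Your local computations are otherwise fine: the index of the stabilizer $B_1(N)$ is indeed $N^2\prod_{p\mid N}(1-p^{-2})=\varphi(N)\psi(N)$, matching Lemma \ref{LemmaIndexOfB1N}.
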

This is a special case of Conjecture \ref{ConjPolyBds}. There is recent work which suggests its tractability: a result of Bourdon and Najman \cite[Proposition 4.1]{BN} can be used to show that when $[F:\Q]$ is odd and $E_{/F}$ is $\overline{\Q}$-isogenous to a rational elliptic curve, one has $\exp E(F)\leq 720720\sqrt{35}\cdot [F:\Q]^{1/2}$, and thus $\# E(F)[\tors] \leq 1441440\sqrt{35}\cdot [F:\Q]^{1/2}$. On the other hand, if one assumes a uniformity conjecture of Zywina on indices of adelic Galois representations of non-CM elliptic curves over $\Q$ \cite[Conjecture 1.3]{Zyw}, then Conjecture \ref{Conj_PolyBdsForI_Q} follows ``up to $\epsilon$" with the same bounds as in Theorem \ref{ThmPolyBoundsIsogenyClass}; the principal difference is that the constants in Theorem \ref{ThmPolyBoundsIsogenyClass} will change and depend only on $\epsilon$.

\subsection{Acknowledgments}
The author thanks Pete L. Clark for his comments on an earlier draft of this paper, and the suggestion that Greenberg's proof of \cite[Proposition 2.1.1]{Gre12} might be adjustable to give a stronger result, which is now Proposition \ref{PropGreenbergNadic}.
The author also thanks the referee for their insightful comments, particularly on improving the degree bounds in Theorem \ref{ThmPolyBoundsIsogenyClass}. Finally, the author thanks Jacob Mayle for his comment that the original version of Corollary \ref{CorollaryAdelicDivisibility}, which was an equality of $n$-adic indices, implied an equivalence of adelic indices; this simplifies some of the presentation of this paper.
\section{Results on Galois Representations of Non-CM Elliptic Curves}
\subsection{Some profinite group theory}
In this section, we will show that a result of Greenberg on $\ell$-adic Galois representations \cite[Proposition 2.1.1]{Gre12} has a proof which applies to $n$-adic representations for composite $n\in\Z^+$, after some modifications. We will then use this composite version to prove that rationally isogenous non-CM elliptic curves have adelic Galois representations with equal indices in $\GL_2(\hat{\Z})$, a fact we will use in our proof of Theorem \ref{ThmPolyBoundsIsogenyClass}; this is recorded as Corollary \ref{CorollaryAdelicDivisibility}.

Before we prove this adelic index result, we will prove a few general facts about subgroups of $\GL_2(\hat{\Z})$. For each integer $n\in\Z^+$, we will denote by $\pi_n\colon \GL_2(\hat{\Z})\twoheadrightarrow \GL_2(\Z/n\Z)$ the mod-$n$ reduction map, and by $\pi_{n^\infty}\colon \GL_2(\hat{\Z})\twoheadrightarrow \GL_2(\Z_n)$ the $n$-adic reduction map. 

By profinite group theory, for any subgroup $G\subseteq \GL_2(\hat{\Z})$ one has that $G$ is open in $\GL_2(\hat{\Z})$ iff $G$ has finite index in $\GL_2(\hat{\Z})$, iff $G$ contains contains an open neighborhood $U(M):=\ker\pi_M$ for some $M\in\Z^+$. When $G$ is open, we will call the least such $M$ for which $U(M)\subseteq G$ the \textit{level of $G$.} 
\begin{lemma}\label{Lemma_AllEqualNAdicIndicesImpliesEqualAdelicIndex}
Let $G$ be a subgroup of $\emph{GL}_2(\hat{\Z})$. 
\begin{enumerate}[a.]
\item One has for all $n\in\Z^+$ that $U(n)\subseteq G$ iff $G=\pi_n^{-1}(\pi_n(G))$.
\item If $U(n)\subseteq G$ then
\[
[\emph{GL}_2(\hat{\Z}):G]=[\emph{GL}_2(\Z_n):\pi_{n^\infty}(G)]=[\emph{GL}_2(\Z/n\Z):\pi_n(G)].
\] 
\item Suppose that $G$ is open, and let $G'\subseteq \emph{GL}_2(\hat{\Z})$ be another open subgroup. If for all $n\in\Z^+$ one has
\[
[\emph{GL}_2(\Z_n):\pi_{n^\infty}(G)]=[\emph{GL}_2(\Z_n):\pi_{n^\infty}(G')]
\]
then one has the equality
\[
[\emph{GL}_2(\hat{\Z}):G]=[\emph{GL}_2(\hat{\Z}):G'].
\]
\end{enumerate}
\end{lemma}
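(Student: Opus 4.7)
The plan is to prove the three parts in order, with each subsequent part reducing to its predecessor after a compatibility check between the levels $U(n) = \ker \pi_n$ and the $n$-adic kernel $\ker \pi_{n^\infty}$.

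For part (a), I would first observe that the inclusion $G \subseteq \pi_n^{-1}(\pi_n(G))$ is automatic. For the reverse direction, given $U(n) \subseteq G$ and any $g \in \pi_n^{-1}(\pi_n(G))$, pick $h \in G$ with $\pi_n(g) = \pi_n(h)$; then $g h^{-1} \in \ker \pi_n = U(n) \subseteq G$, so $g \in G$. Conversely, if $G = \pi_n^{-1}(\pi_n(G))$ then $U(n) = \pi_n^{-1}(\{I\}) \subseteq \pi_n^{-1}(\pi_n(G)) = G$ since $I \in \pi_n(G)$.

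For part (b), the key observation is that if $n = \prod_i p_i^{e_i}$, then under the identification $\widehat{\Z} \cong \Z_n \times \prod_{p \nmid n} \Z_p$ (where $\Z_n = \prod_i \Z_{p_i}$), the map $\pi_n$ factors through $\pi_{n^\infty}$, so $\ker \pi_{n^\infty} \subseteq \ker \pi_n = U(n) \subseteq G$. Hence the argument from (a) applied to $\pi_{n^\infty}$ gives $G = \pi_{n^\infty}^{-1}(\pi_{n^\infty}(G))$, yielding a bijection of coset spaces and the equality $[\GL_2(\hat{\Z}):G] = [\GL_2(\Z_n):\pi_{n^\infty}(G)]$. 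The second equality follows either from (a) directly (giving $[\GL_2(\hat{\Z}):G] = [\GL_2(\Z/n\Z):\pi_n(G)]$) or by composing with the surjection $\GL_2(\Z_n) \twoheadrightarrow \GL_2(\Z/n\Z)$, whose kernel is exactly $\pi_{n^\infty}(U(n))$ and is contained in $\pi_{n^\infty}(G)$.

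For part (c), I would let $M$ and $M'$ denote the levels of $G$ and $G'$, which exist since both groups are open. Set $N := \mathrm{lcm}(M, M')$. Then $U(N) \subseteq U(M) \subseteq G$ and $U(N) \subseteq U(M') \subseteq G'$, so part (b) applies to both $G$ and $G'$ at the single integer $N$, giving
\[
[\GL_2(\hat{\Z}) : G] = [\GL_2(\Z_N) : \pi_{N^\infty}(G)] \quad \text{and} \quad [\GL_2(\hat{\Z}) : G'] = [\GL_2(\Z_N) : \pi_{N^\infty}(G')].
\]
The hypothesis applied to this particular $N$ then forces $[\GL_2(\hat{\Z}) : G] = [\GL_2(\hat{\Z}) : G']$.

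No part of this seems genuinely difficult; the only mild subtlety is the bookkeeping in part (b) to see that $U(n) \subseteq G$ already implies $\ker \pi_{n^\infty} \subseteq G$, which requires writing $\hat{\Z}$ as a product over primes dividing $n$ and primes not dividing $n$. Once this is in hand, part (c) is a one-line reduction to part (b) via a common multiple of levels, and no infinite limiting argument over $n$ is needed.
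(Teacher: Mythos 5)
Your proposal is correct and follows essentially the same route as the paper: part (a) via the coset argument, part (b) via the correspondence of cosets under a homomorphism whose kernel lies in the subgroup (noting that $\pi_n$ factors through $\pi_{n^\infty}$, so $\ker\pi_{n^\infty}\subseteq U(n)\subseteq G$), and part (c) by passing to $N=\mathrm{lcm}(M,M')$ of the two levels and invoking the hypothesis at that single $N$.
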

\begin{proof}
For part a., suppose first that $U(n)\subseteq G$. To check that $G=\pi_n^{-1}(\pi_n(G))$, we note that the containment $\subseteq$ is clear. For the reverse containment, observe that if $x\in \pi_n^{-1}(\pi_n(G))$ then $\pi_n(x)\in \pi_n(G)$, and so $\pi_n(x)=\pi_n(g)$ for some $g\in G$; in particular, $xg^{-1}\in \ker\pi_n\subseteq G$, whence we have $x\in G$.
For the converse, assume that $\pi_n^{-1}(\pi_n(G))=G$. Then we have $U(n)=\pi_n^{-1}(\lbrace I\rbrace)\subseteq G$, where $I\in \GL_2(\Z/n\Z)$ is the identity matrix.

Part b. follows from the general fact that if $f\colon G_0\rightarrow K$ is a group homomorphism and $G\subseteq G_0$ is a subgroup containing the kernel $\ker f$, then a set of coset representatives $\lbrace f(g_i)\rbrace_i$ for $f(G)$ in $f(G_0)$ lifts to a set of coset representatives $\lbrace g_i\rbrace_i$ for $G$ in $G_0$.
In particular, when $G\subseteq \GL_2(\hat{\Z})$ is a subgroup with $U(n)\subseteq G$, one has that $[\GL_2(\hat{\Z}):G]=[\GL_2(\Z/n\Z):\pi_{n}(G)]$. It also follows that $[\GL_2(\hat{\Z}):G]=[\GL_2(\Z_n):\pi_{n^\infty}(G)]$, via the containment $\ker\pi_{n^\infty}\subseteq G$ (the map $\pi_{n}$ factors through $\pi_{n^\infty}$).

For part c., let us set $N:=\textrm{lcm}(M,M')$ where $M$ and $M'$ are the levels of $G$ and $G'$ respectively. Since $U(N)\subseteq U(M)\subseteq G$ and $U(N)\subseteq U(M')\subseteq G'$, by part b. we have both
\[
[\GL_2(\hat{\Z}):G]=[\GL_2(\Z_N):\pi_{N^\infty}(G)]
\]
and
\[
[\GL_2(\hat{\Z}):G']=[\GL_2(\Z_N):\pi_{N^\infty}(G')].
\]
Thus, our hypothesis implies that $[\GL_2(\hat{\Z}):G]=[\GL_2(\hat{\Z}):G']$.
\end{proof}
\subsection{A composite version of a result of Greenberg}
Our next goal is to prove a composite version of \cite[Proposition 2.1.1]{Gre12}. Given an integer $n\in\Z^+$, let us recall that the \textit{ring of $n$-adic integers} is
\[
\Z_n:=\varprojlim_{\ell\mid n, k\geq 1}\Z/\ell^k\Z\cong \prod_{\ell\mid n}\Z_\ell.
\]
Following this, the \textit{ring of $n$-adic numbers} is
\[
\Q_n:=\prod_{\ell\mid n}\Q_\ell.
\]
For a free $\Q_n$-module $V$ of finite rank, we call the $\Z_n$-span of any basis of $V$ a \textit{$\Z_n$-lattice. }

\begin{proposition}\label{PropGreenbergNadic}
Fix a positive integer $n$. Let $V$ be a free finite rank $\Q_n$-module. Suppose that $G$ is a compact open subgroup of $\emph{Aut}_{\Q_n}(V)$. If $T$ and $T'$ are two $G$-invariant $\Z_n$-lattices in $V$, then 
\[
[\emph{Aut}_{\Z_n}(T):G]=[\emph{Aut}_{\Z_n}(T'):G].
\]
\end{proposition}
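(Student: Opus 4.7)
The plan is to reduce to Greenberg's $\ell$-adic Proposition 2.1.1 by exploiting the finite product decomposition $\Z_n \cong \prod_{\ell \mid n} \Z_\ell$. The ``modifications'' of Greenberg's proof will then consist essentially of bookkeeping around this product structure.

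First I would split every relevant object along this decomposition. Write $V = \prod_{\ell \mid n} V_\ell$ with each $V_\ell$ a free $\Q_\ell$-module of the same rank; using the idempotents of $\Z_n$, any $\Z_n$-lattice splits uniquely as $T = \prod_{\ell} T_\ell$, with each $T_\ell$ a $\Z_\ell$-lattice in $V_\ell$, and similarly $T' = \prod T'_\ell$. Consequently $\Aut_{\Q_n}(V) = \prod_{\ell} \Aut_{\Q_\ell}(V_\ell)$ and $\Aut_{\Z_n}(T) = \prod_{\ell} \Aut_{\Z_\ell}(T_\ell)$, with an analogous product for $T'$.

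Since $G$ itself need not respect the product decomposition, the key step is to replace $G$ with the larger subgroup $H := \Aut_{\Z_n}(T) \cap \Aut_{\Z_n}(T')$, which does contain $G$ (as $G$ preserves both lattices). By the tower law, it suffices to show $[\Aut_{\Z_n}(T):H] = [\Aut_{\Z_n}(T'):H]$. Unlike $G$, the group $H$ factors as $H = \prod_{\ell \mid n} H_\ell$ with $H_\ell := \Aut_{\Z_\ell}(T_\ell) \cap \Aut_{\Z_\ell}(T'_\ell)$, and each $H_\ell$ is compact open in $\Aut_{\Q_\ell}(V_\ell)$ while preserving both $T_\ell$ and $T'_\ell$. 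Applying Greenberg's original $\ell$-adic result to each $H_\ell$ gives $[\Aut_{\Z_\ell}(T_\ell):H_\ell] = [\Aut_{\Z_\ell}(T'_\ell):H_\ell]$, and the identity
\[
[\Aut_{\Z_n}(T):H] = \prod_{\ell \mid n} [\Aut_{\Z_\ell}(T_\ell):H_\ell]
\]
(together with its analog for $T'$) then finishes the proof.

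The main obstacle I expect is not conceptual but rather verifying that the product structure of $\Z_n$ cleanly propagates to lattices, their automorphism groups, and intersections thereof, and that indices of compact open subgroups multiply across the finite product. These are all routine, but each must be checked carefully. In particular, the idempotents of $\Z_n$ must act on any $G$-invariant lattice, which is why the lattice splits uniquely as a product; this is the step that most directly uses that we are in the $n$-adic (rather than more general profinite) setting.
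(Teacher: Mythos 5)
Your argument is correct, but it takes a genuinely different route from the paper's. The paper does what its introduction advertises: it reruns Greenberg's Haar-measure proof directly in the composite setting, observing that $\GL_d(\Q_n)\cong\prod_{\ell\mid n}\GL_d(\Q_\ell)$ is unimodular because each factor is, normalizing $\mu(G)=1$ so that $\mu(\Aut_{\Z_n}(T))=[\Aut_{\Z_n}(T):G]$, and then using transitivity of $\Aut_{\Q_n}(V)$ on $\Z_n$-lattices together with bi-invariance of $\mu$ to conclude that the conjugate stabilizers $\Aut_{\Z_n}(T)$ and $\Aut_{\Z_n}(T')=\sigma\Aut_{\Z_n}(T)\sigma^{-1}$ have equal measure. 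You instead treat Greenberg's $\ell$-adic statement as a black box, and the one genuinely new ingredient in your reduction is the passage from $G$ to $H:=\Aut_{\Z_n}(T)\cap\Aut_{\Z_n}(T')$; this is exactly the right move, since $G$ itself need not decompose as a product over $\ell\mid n$ whereas $H$ does, and the common factor $[H:G]$ (finite because $G$ is open and $H$ is compact) cancels from both sides by the tower law. The routine verifications you flag do all go through: the idempotents of $\Z_n$ split any $\Z_n$-lattice (no $G$-invariance is needed for that step), each $H_\ell$ is compact open as an intersection of two compact open subgroups and visibly preserves both $T_\ell$ and $T'_\ell$, and indices multiply over the finite product. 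The trade-off is that the paper's proof is self-contained modulo the unimodularity citation, while yours is shorter at the price of quoting the $\ell$-adic case; your intersection trick also shows, somewhat more generally, that the proposition for composite $n$ follows formally from its validity at each prime dividing $n$.
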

\begin{proof}
Suppose that $V$ is free of rank $d$ over $\Q_n$. Fixing a basis for $V$, one has an isomorphism $\Aut_{\Q_n}(V)\cong \GL_d(\Q_n)\cong \prod_{\ell\mid n}\GL_d(\Q_\ell)$. 

For each prime $\ell\in\Z^+$, the group $\GL_d(\Q_\ell)$ is a locally compact topological group, and thus has a left Haar measure. In fact, since $\GL_d(\Q_\ell)$ is a reductive $\ell$-adic group it is also \textit{unimodular}: every left Haar measure is also a right Haar measure \cite[Theorem 5.1]{Glo96}. It follows then that the finite product $\prod_{\ell\mid n} \GL_d(\Q_\ell)\cong \GL_d(\Q_n)$ is also unimodular for composite $n\in\Z^+$.

Fix a Haar measure $\mu$ on $\GL_d(\Q_n)$; since $G$ is compact open in $\GL_d(\Q_n)$, we have $\mu(G)>0$, so we may assume that $\mu(G)=1$.
Given a $\Z_n$-lattice $T$ in $V$, we can identify $\Aut_{\Z_n}(T)\cong \GL_d(\Z_n)$ once we choose a $\Z_n$-basis for $T$. For any $\sigma\in \Aut_{\Q_n}(V)$ one has that $\sigma(T)$ is a $\Z_n$-lattice; this gives us an action of $\Aut_{\Q_n}(V)$ on the set of $\Z_n$-lattices in $V$. This action is clearly transitive, and the stabilizer of any $\Z_n$-lattice $T$ is $\Aut_{\Z_n}(T)$. Additionally, $\Aut_{\Z_n}(T)$ is a compact open subgroup of $\Aut_{\Q_n}(V)$, and $G$ is contained in $\Aut_{\Z_n}(T)$ and has finite index. Since $\Aut_{\Z_n}(T)$ is a finite disjoint union of left cosets of $G$, and since $\mu(G)=1$ and $\mu$ is left invariant, it follows that 
\begin{equation}\label{EqnMeasureEqualsIndex}
\mu(\Aut_{\Z_n}(T))=[\Aut_{\Z_n}(T):G].
\end{equation}

Let $T$ and $T'$ be $G$-invariant $\Z_n$-lattices of $V$.  
Since $\Aut_{\Q_n}(V)$ acts transitively on $\Z_n$-lattices, there exists $\sigma\in \Aut_{\Q_n}(V)$ with $\sigma(T)=T'$. It follows then that $\Aut_{\Z_n}(T')=\sigma \Aut_{\Z_n}(T)\sigma^{-1}$. As $\mu$ is both left and right invariant, we conclude that $\mu(\Aut_{\Z_n}(T'))=\mu(\Aut_{\Z_n}(T))$, which by \eqref{EqnMeasureEqualsIndex} implies our result.
\end{proof}
\subsection{Galois representations of elliptic curves}
Given an elliptic curve $E$ over a number field $F$, for each integer $n\in\Z^+$ the absolute Galois group $G_F:=\Gal(\overline{F}/F)$ acts on the $n$-torsion subgroup $E[n]$ of $E$. This action is described by the \textit{mod-$n$ Galois representation of $E$,} denoted by
\[
\rho_{E,n}\colon G_F\rightarrow \Aut(E[n]).
\]
Since $E[n]$ is a free $\Z/n\Z$-module of rank two, choosing a basis $\lbrace P,Q\rbrace$ for $E[n]$ gives an isomorphism $\Aut(E[n])\cong\GL_2(\Z/n\Z)$; we will often work with a basis implicitly, suppressing dependence on one. 

The action of $G_F$ on each torsion subgroup $E[n]$ for all $n\in\Z^+$ induces an action on their inverse limit $T(E):=\varprojlim E[n]$, called the \textit{adelic Tate module of $E_{/F}$}. Since each $E[n]$ is a free rank two $\Z/n\Z$-module, it follows that $T(E)$ is free of rank two over the profinite integers $\hat{\Z}:=\varprojlim \Z/n\Z$. The action of $G_F$ on $T(E)$ is called the \textit{adelic Galois representation of $E_{/F}$,} denoted by
\[
\rho_E\colon G_F\rightarrow \Aut_{\hat{\Z}}(T(E)).
\]
This also describes the action of $G_F$ on the full torsion subgroup $E[\tors]$.
Choosing a basis for $T(E)$ gives an isomorphism $\Aut_{\hat{\Z}}(T(E))\cong \GL_2(\hat{\Z})$. Assume hereafter that our elliptic curves are non-CM; then it follows by \cite[Th\'eor\`eme 2]{Ser72} that the image $\rho_E(G_F)$ is open in $\GL_2(\hat{\Z})$. We say that the \textit{adelic level of $E_{/F}$} is the level of $\rho_{E}(G_F)$ as a subgroup of $\GL_2(\hat{\Z})$.
By abuse of notation, we will often suppress its dependence on $E$ and $F$.

Given an integer $n\in\Z^+$, let us define the \textit{$n$-adic Tate module of $E_{/F}$} as $T_n(E):=\varprojlim_{k\geq 1}E[n^k]$. We have that $T_n(E)$ is a free rank two $\Z_n$-module. 
The \textit{$n$-adic representation of $E_{/F}$} is the action of $G_F$ on $T_n(E)$, denoted by
\[
\rho_{E,n^\infty}\colon G_F\rightarrow \Aut_{\Z_n}(T_n(E)).
\]
This also describes the action of $G_F$ on the $n$-primary torsion subgroup $E[n^\infty]:=\bigcup_{k\geq 1}E[n^k]=\sum_{\ell\mid n} E[\ell^\infty]$. 
Since $\rho_{E,n^\infty}(G_F)$ is a projection of $\rho_E(G_F)$, it is open in $\GL_2(\Z_n)$.

The action of $G_F$ on $T_n(E)$ extends naturally to an action on the rational $n$-adic Tate module $V_n(E):=T_n(E)\otimes_{\Z_n} \Q_n$. We can realize $\rho_{E,n^\infty}(G_F)$ as finite-index subgroup of $\Aut_{\Z_n}(T_n(E))$, the latter of which is a compact open subgroup of $\Aut_{\Q_n}(V_n(E))$. 

Suppose two elliptic curves $E_{/F}$ and $E'_{/F}$ are $F$-isogenous; let us write this isogeny as $\phi\colon E\rightarrow E'$. Choose an integer $n\in\Z^+$; then this isogeny induces a $\Z_n[G_F]$-module homomorphism $\phi\colon T_n(E)\rightarrow T_n(E')$. In fact, we have a short exact sequence of $\Z_n[G_F]$-modules,
\[
0\rightarrow T_n(E)\xrightarrow{\phi} T_n(E')\rightarrow C\rightarrow 0,
\]
for some finite module $C$.
Tensoring this sequence to $\Q_n$ shows that the rational Tate modules $V_n(E)$ and $V_n(E')$ are isomorphic $G_F$-modules, and so $T_n(E)$ and $T_n(E')$ may be realized as $G_F$-stable $\Z_n$-lattices in $V_n(E)$. By Proposition \ref{PropGreenbergNadic}, this implies that
\[
[\GL_2(\Z_n):\rho_{E,n^\infty}(G_F)]=[\GL_2(\Z_n):\rho_{E',n^\infty}(G_F)].
\]
Since $n\in\Z^+$ was arbitrary, we have proven the following key result after applying Lemma \ref{Lemma_AllEqualNAdicIndicesImpliesEqualAdelicIndex}.
\begin{corollary}\label{CorollaryAdelicDivisibility}
Let $E_{/F}$ and $E'_{/F}$ be $F$-isogenous non-CM elliptic curves. Then one has
\[
[\emph{GL}_2(\hat{\Z}):\rho_{E}(G_F)]=[\emph{GL}_2(\hat{\Z}):\rho_{E'}(G_F)].
\]
\end{corollary}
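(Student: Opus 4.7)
The plan is to combine Proposition \ref{PropGreenbergNadic} (applied one prime-at-a-time component at each $n$) with Lemma \ref{Lemma_AllEqualNAdicIndicesImpliesEqualAdelicIndex}(c), exploiting the ``two $G_F$-stable lattices in a common rational Tate module'' phenomenon produced by any $F$-rational isogeny of elliptic curves.

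First, I would fix a positive integer $n$ and use the $F$-rational isogeny $\phi\colon E\to E'$ to relate the two $n$-adic Tate modules. Since $\phi$ is $F$-rational, its induced map $T_n(E)\to T_n(E')$ commutes with the $G_F$-action; since $\phi$ has finite kernel, this map fits into a short exact sequence
\[
0\to T_n(E)\to T_n(E')\to C\to 0
\]
of $\Z_n[G_F]$-modules with $C$ finite. Tensoring to $\Q_n$ kills $C$, yielding a $\Q_n[G_F]$-isomorphism $V_n(E)\xrightarrow{\sim}V_n(E')$. Using this isomorphism to identify the two rational Tate modules, both $T_n(E)$ and the image of $T_n(E')$ sit as $G_F$-stable $\Z_n$-lattices in a single free rank-two $\Q_n$-module $V$, and the image $G:=\rho_E(G_F)\subseteq \Aut_{\Q_n}(V)$ coincides under this identification with the image $\rho_{E'}(G_F)$.

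Next, I would apply Proposition \ref{PropGreenbergNadic} to $V$, the compact open subgroup $G$, and the two $G$-invariant $\Z_n$-lattices $T_n(E)$ and $T_n(E')$. The proposition gives
\[
[\Aut_{\Z_n}(T_n(E)):G]=[\Aut_{\Z_n}(T_n(E')):G],
\]
which, after choosing $\Z_n$-bases for each lattice, translates to
\[
[\GL_2(\Z_n):\rho_{E,n^\infty}(G_F)]=[\GL_2(\Z_n):\rho_{E',n^\infty}(G_F)].
\]
Since $n$ was arbitrary and both $\rho_E(G_F)$ and $\rho_{E'}(G_F)$ are open in $\GL_2(\hat{\Z})$ by Serre's open image theorem, Lemma \ref{Lemma_AllEqualNAdicIndicesImpliesEqualAdelicIndex}(c) upgrades the simultaneous coincidence of all $n$-adic indices to the desired equality of adelic indices.

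The one place that needs careful bookkeeping is the identification step: the images $\rho_{E,n^\infty}(G_F)$ and $\rho_{E',n^\infty}(G_F)$ are a priori distinct subgroups living in different automorphism groups $\Aut_{\Z_n}(T_n(E))$ and $\Aut_{\Z_n}(T_n(E'))$, and one must verify that the isogeny-induced isomorphism $V_n(E)\cong V_n(E')$ genuinely realizes them as the \emph{same} subgroup $G\subseteq \Aut_{\Q_n}(V)$ stabilizing both lattices simultaneously. This is precisely what the $\Z_n[G_F]$-equivariance of $\phi$ provides, and once it is in hand there is no further substantive work: the corollary follows from the earlier profinite and measure-theoretic machinery already in place.
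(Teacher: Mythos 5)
Your proposal is correct and follows essentially the same route as the paper: use the $F$-rational isogeny to realize $T_n(E)$ and $T_n(E')$ as $G_F$-stable $\Z_n$-lattices in a common rational Tate module, apply Proposition \ref{PropGreenbergNadic} to equate the $n$-adic indices for every $n$, and invoke Lemma \ref{Lemma_AllEqualNAdicIndicesImpliesEqualAdelicIndex}(c) to conclude. Your closing remark about the equivariant identification is exactly the point the paper addresses by noting that $\phi$ is a $\Z_n[G_F]$-module homomorphism.
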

Let us note one more fact about Galois representations of elliptic curves with a rational torsion point. 
For each integer $n\geq 2$, we define a distinguished subgroup of $\GL_2(\Z/n\Z)$,
\[
B_1(n):=\left\lbrace \begin{bmatrix}
1&b\\
0&d
\end{bmatrix}\in \GL_2(\Z/n\Z)\right\rbrace.
\]
When an elliptic curve $E_{/F}$ has an $F$-rational order $n$ torsion point, it follows that the image $\rho_{E,n}(G_F)$ is contained in $B_1(n)$ up to conjugacy. This implies the index divisibility 
\[
[\GL_2(\Z/n\Z):B_1(n)]\mid [\GL_2(\Z/n\Z):\rho_{E,n}(G_F)].
\]
The former index can be written more explicitly. Let us recall Euler's phi function $\varphi\colon \Z^+\rightarrow\Z^+$ and the Dedekind psi function $\psi\colon \Z^+\rightarrow \Z^+$, both arithmetic multiplicative functions defined on prime powers via $\varphi(\ell^k)=\ell^{k-1}(\ell-1)$ and $\psi(\ell^k)=\ell^{k-1}(\ell+1)$ respectively.
\begin{lemma}\label{LemmaIndexOfB1N}
For $n\geq 2$ one has
\[
[\emph{GL}_2(\Z/n\Z):B_1(n)]=\varphi(n)\psi(n).
\]
\end{lemma}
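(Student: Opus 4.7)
The plan is to exhibit $B_1(n)$ as a point stabilizer for the natural action of $\GL_2(\Z/n\Z)$ on $(\Z/n\Z)^2$, and then use orbit--stabilizer together with a primitive-vector count (reduced to prime powers by the Chinese Remainder Theorem).

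First, I would observe that a matrix $A\in \GL_2(\Z/n\Z)$ fixes the column vector $e_1=(1,0)^T$ under left multiplication if and only if its first column is $(1,0)^T$, i.e. if and only if $A$ lies in $B_1(n)$. In other words, $B_1(n)=\mathrm{Stab}_{\GL_2(\Z/n\Z)}(e_1)$, and so by orbit--stabilizer the index $[\GL_2(\Z/n\Z):B_1(n)]$ equals the size of the orbit of $e_1$.

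Next I would identify this orbit as the set of \emph{primitive} (a.k.a.\ unimodular) vectors in $(\Z/n\Z)^2$, i.e.\ those $v=(a,b)^T$ which extend to a $\Z/n\Z$-basis of $(\Z/n\Z)^2$, equivalently those with $\gcd(a,b,n)=1$. The orbit contains only primitive vectors since $\GL_2(\Z/n\Z)$ preserves bases, and conversely any primitive vector can be completed to a basis, giving a matrix in $\GL_2(\Z/n\Z)$ mapping $e_1$ to it.

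It then remains to count primitive vectors in $(\Z/n\Z)^2$. Using the isomorphism $(\Z/n\Z)^2\cong \prod_{\ell^k\|n}(\Z/\ell^k\Z)^2$ coming from CRT, the condition $\gcd(a,b,n)=1$ becomes, at each prime $\ell\mid n$, the condition that $(a,b)\pmod{\ell}$ is nonzero. Thus the count is multiplicative in $n$, and for a prime power $\ell^k$ it equals
\[
\ell^{2k}-\ell^{2(k-1)}=\ell^{k-1}(\ell-1)\cdot \ell^{k-1}(\ell+1)=\varphi(\ell^k)\,\psi(\ell^k).
\]
Taking the product over prime-power divisors and using the multiplicativity of both $\varphi$ and $\psi$ yields $\varphi(n)\psi(n)$, as required.

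I do not anticipate any real obstacle here: the only mild subtlety is confirming that the orbit of $e_1$ is exactly the primitive vectors (which amounts to the standard fact that a pair $(a,b)$ generates $\Z/n\Z$ iff $\gcd(a,b,n)=1$). One could alternatively argue directly by computing $|B_1(n)|=n\,\varphi(n)$ and dividing into the standard formula $|\GL_2(\Z/n\Z)|=n^4\prod_{p\mid n}(1-p^{-1})(1-p^{-2})$, but the orbit--stabilizer route is cleaner and makes the appearance of both $\varphi$ and $\psi$ transparent.
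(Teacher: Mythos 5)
Your proof is correct. The paper itself does not supply an argument for this lemma---it simply cites \cite[\S 7.2]{CGPS22}---so you have filled in a self-contained proof where the paper defers to a reference. Your orbit--stabilizer approach is sound: $B_1(n)$ is indeed the stabilizer of $e_1$, the orbit is the set of primitive vectors in $(\Z/n\Z)^2$ (this is the standard lifting argument that a primitive vector completes to a $\Z/n\Z$-basis, which in turn reduces to the prime-power case by CRT or directly via Nakayama), and the count
\[
\#\{\text{primitive vectors in }(\Z/\ell^k\Z)^2\}=\ell^{2k}-\ell^{2k-2}=\varphi(\ell^k)\psi(\ell^k)
\]
is right, multiplying correctly across prime powers. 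Your alternative computation via $|B_1(n)|=n\varphi(n)$ and the order formula for $\GL_2(\Z/n\Z)$ also checks out and gives the same answer. The orbit--stabilizer route is preferable in this context precisely because it exposes the Dedekind $\psi$ function as counting index-$\ell$ sublattices (equivalently, lines mod $\ell$), which is why $\psi$ appears naturally in the index of Borel-type subgroups of $\GL_2$.
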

\begin{proof}
See e.g. \cite[\S 7.2]{CGPS22}.
\end{proof}
\section{Polynomial Bounds on Torsion}
We are ready to prove the main result of this paper.
\begin{proof}[Proof of Theorem \ref{ThmPolyBoundsIsogenyClass}]
To recapitulate Theorem \ref{ThmPolyBoundsIsogenyClass}, we will show that for any fixed non-CM elliptic curve ${E_0}_{/F_0}$, for all $\epsilon>0$ there exist constants $c_\epsilon:=c_\epsilon(E_0,F_0),C_\epsilon:=C_\epsilon(E_0,F_0)>0$ such that for all elliptic curves $E_{/F}$ geometrically isogenous to $E_0$, one has both 
\[
\exp E(F)[\tors]\leq c_\epsilon\cdot [F:\Q]^{1/2+\epsilon}
\]
and
\[
\# E(F)[\tors]\leq C_\epsilon\cdot [F:\Q]^{1+\epsilon}.
\]
First, observe that the desired bound on $\# E(F)[\tors]$ will follow from the desired bound on the exponent $\exp E(F)[\tors]$, via the divisibility
\[
\# E(F)[\tors]\mid (\exp E(F)[\tors])^2
\]
(one can take $C_\epsilon:=c_{\epsilon/2}^2$).
To this end, our proof will focus on bounding $\exp E(F)[\tors]$. 

Let us write $n:=\exp E(F)[\tors]$. Then up to conjugacy we have $\rho_{E,n}(G_F)\subseteq B_1(n)$, so by Lemma \ref{LemmaIndexOfB1N} we get
\begin{equation}\label{EqnOrderM^aTorsionPointDegree}
\varphi(n)\psi(n)\mid [\GL_2(\Z/n\Z):\rho_{E,n}(G_F)].
\end{equation}
By \cite[Lemma 3.1]{LFN20} there exists a(n at worst) quadratic extension $L/FF_0$ for which $E$ and $E_0$ are $L$-isogenous. 
Thus, Corollary \ref{CorollaryAdelicDivisibility} implies that
\begin{equation}\label{Eqn_EquivalentAdelicIndicesOverQuadratic}
[\GL_2(\hat{\Z}):\rho_{E}(G_{L})]=[\GL_2(\hat{\Z}):\rho_{E_0}(G_{L})].
\end{equation}
Since the extension $F_0(E_0[\tors])/F_0$ is normal, so is the extension $L(E_0[\tors])/L$, and we have
\[
\Gal(L(E_0[\tors])/L)\cong \Gal(F_0(E_0[\tors])/L\cap F_0(E_0[\tors]))
\]
(this general fact is e.g. \cite[Proposition 7.15]{Mil}). Since $\rho_{E_0}(G_L)\cong \Gal(L(E_0[\tors])/L)$, we see that $\rho_{E_0}(G_L)$ is a subgroup of $\rho_{E_0}(G_{F_0})$ of index $[L\cap F_0(E_0[\tors]):F_0]$. Thus, we deduce that
\begin{equation}\label{Eqn_AdelicIndexBaseChange}
[\GL_2(\hat{\Z}):\rho_{E_0}(G_{L})]\mid [\GL_2(\hat{\Z}):\rho_{E_0}(G_{F_0})]\cdot [L:F_0].
\end{equation}
Finally, since $[FF_0:\Q]=[FF_0:F]\cdot [F:\Q]\mid [F_0:\Q]!\cdot [F:\Q]$ and $[L:F_0]\mid 2[FF_0:F_0]$, we find that 
\[
[L:F_0]\mid 2([F_0:\Q]-1)!\cdot [F:\Q].
\]
Combining this fact with \eqref{EqnOrderM^aTorsionPointDegree}, \eqref{Eqn_EquivalentAdelicIndicesOverQuadratic} and \eqref{Eqn_AdelicIndexBaseChange}, we conclude that
\begin{equation}\label{Eqn_DegreeDivByN}
\varphi(n)\psi(n)\mid 2I([F_0:\Q]-1)!\cdot [F:\Q],
\end{equation}
where $I:=[\GL_2(\hat{\Z}):\rho_{E_0}(G_{F_0})]$ is the adelic index of our fixed elliptic curve ${E_0}_{/F_0}$.

One can check directly that $\psi(n)>n$ for any $n>1$. 
Fixing an $\epsilon\in (0,1)$, by \cite[Theorem 327]{HW08} there exists a constant $b_\epsilon>0$ such that for all $n\in\Z^+$ one has 
\[
\varphi(n)>b_\epsilon\cdot n^{1-\epsilon}.
\]
Thus, from \eqref{Eqn_DegreeDivByN} we deduce that
\[
n^{2-\epsilon}< 2Ib_\epsilon^{-1}([F_0:\Q]-1)!\cdot [F:\Q].
\]
Since $n:=\exp E(F)[\tors]$, we conclude that
\[
\exp E(F)[\tors]< c_\epsilon\cdot [F:\Q]^{1/2+\epsilon}
\]
where $c_\epsilon:=c_\epsilon(E_0,F_0):=(2Ib_\epsilon^{-1}([F_0:\Q]-1)!)^{1/(2-\epsilon)}$,
which is the desired upper bound on the exponent of $E(F)[\tors]$.
\end{proof}

\end{document}